\theoremstyle:=definition,remark,plain\do{%
        \expandafter\g@addto@macro\csname th@\theoremstyle\endcsname{%
            \addtolength\thm@preskip\parskip
            }%
        }
\newcommand{\cH}{{\mathcal H}}
\newcommand{\floor}[1]{{\left\lfloor{#1}\right\rfloor}}
\newtheorem{theorem}{Theorem}[section]
\newtheorem{prop}{Proposition}[section]
\newtheorem{lemma}{Lemma}[section]
\title{Distance-Uniform Graphs with Large Diameter}
\author{Mikhail Lavrov\thanks{University of Illinois at Urbana-Champaign, Department of Mathematics. E-mail: \texttt{mlavrov{@}illinois.edu}.} \and Po-Shen Loh\thanks{Carnegie Mellon University, Department of Mathematical Sciences. E-mail: \texttt{ploh{@}cmu.edu}.} \and Arnau Messegu\'e\thanks{Polytechnic University of Catalonia, Computer Science Department. E-mail: \texttt{messegue{@}cs.upc.edu}.}}
\begin{document}

\maketitle

\begin{abstract}
	An $\epsilon$-distance-uniform graph is one in which from every vertex, all but an $\epsilon$-fraction of the remaining vertices are at some fixed distance $d$, called the critical distance. We consider the maximum possible value of $d$ in an $\epsilon$-distance-uniform graph with $n$ vertices. We show that for $\frac1n \le \epsilon \le \frac1{\log n}$, there exist $\epsilon$-distance-uniform graphs with critical distance $2^{\Omega(\frac{\log n}{\log \epsilon^{-1}})}$, disproving a conjecture of Alon et al.\ that $d$ can be at most logarithmic in $n$. We also show that our construction is best possible, in the sense that an upper bound on $d$ of the form $2^{O(\frac{\log n}{\log \epsilon^{-1}})}$ holds for all $\epsilon$ and $n$.
\end{abstract}

\section{Introduction}

We say that an $n$-vertex graph is \emph{$\epsilon$-distance-uniform} for some parameter $\epsilon>0$ if there is a value $d$, called the \emph{critical distance}, such that, for every vertex $v$, all but at most $\epsilon n$ of the other vertices are at distance exactly $d$ from $v$. Distance-uniform graphs exist for some, but not all, possible triplets $(n, \epsilon, d)$; a trivial example is the complete graph $K_n$, which is distance-uniform with $\epsilon = \frac1n$ and $d=1$. So it is natural to try to characterize which triplets $(n,\epsilon, d)$ are realizable as distance-uniform graphs.

The notion of distance uniformity is introduced by Alon, Demaine, Hajiaghayi, and Leighton in \cite{alon13}, motivated by the analysis of network creation games. It turns out that equilibria in a certain network creation game can be used to construct distance-uniform graphs. As a result, understanding distance-uniform graphs tells us which equilibria are possible.

\subsection{From network creation games to distance uniformity}

The use of the Internet has been growing significantly in the last few decades. This fact has motivated theoretical studies that try to capture properties of Internet-like networks into models. Fabrikant et al. \cite{fabrikant03} proposed one of these first models, the so called \emph{sum classic network creation game} (or abbreviated sum classic) from which variations (like \cite{bilo15}, \cite{ehsani15}) and extensions of it (like  \cite{bilo15max}, \cite{brandes08}) have been considered in the subsequent years. 

Although all these models try to capture different aspects of Internet, all of them can be identified as \emph{strategic games}: every agent or node (every player in the game) buys some links (every player picks an strategy) in order to be connected in the network formed by all the players (the strategic configurations formed as a combination of the strategies of every player) and tries to minimize a cost function modeling their needs and interests. 

All these models together with their results constitute a whole subject inside game theory and computer science that stands on its own: the field of \emph{network creation games}. Some of the most relevant concepts discussed in network creation games are \emph{optimal network}, \emph{Nash equilibria} and the \emph{price of anarchy}, among others. 

An optimal network is the outcome of a configuration having minimum overall cost, that is, the sum of the costs of every player has the minimum possible value. A Nash equilibrium is a configuration where each player cannot strictly decrease his cost function given that the strategies of the other players are fixed.  The price of anarchy quantifies the loss in terms of efficiency between the worst Nash equilibrium (anyone having maximum overall cost) and any optimal network (anyone having minimal overall cost). 

The sum classic is specified with a set of players $N = \left\{ 1,...,n\right\}$ and a parameter $\alpha > 0$ representing the cost of establishing a link. Every player $i\in N$ wishes to be connected in the resulting network, then the strategy $s_i \in \mathcal{P}(N \setminus \left\{i \right\})$ represents the subset of players to which $i$ establishes links. Then considering the tuple of the strategies for every player $s=(s_1,...,s_n)$ (called a \emph{strategy profile}) the \emph{communication network} associated to $s$, noted as $G[s]$, is defined as the undirected graph having $N$ as the set of vertices and the edges $(i,j)$ iff $i \in s_j $ or $j \in s_i$. The communication network represents the resulting network obtained after considering the links bought for every node. Then the cost function for a strategy profile $s = (s_1,...,s_n)$ has two components: the \emph{link cost} and the \emph{usage cost}. The link cost for a player $i \in N$ is $\alpha |s_i|$ and it quantifies the cost of buying $|s_i|$ links. In contrast, the usage cost for a player $i$ is $\sum_{j \neq i} d_{G[s]}(i,j)$. Therefore, the total cost incurred for player $i$ is $c_i(s)=\alpha |s_i|+\sum_{j \neq i} d_{G[s]}(i,j)$. 

On the other hand, a given undirected graph $G$ in the \emph{sum basic network creation game} (or abbreviated sum basic) is said to be in equilibrium iff, for every edge $(i,j) \in E(G)$ and every other player $k$, the player $i$ does not strictly decrease the sum of distances to the other players by swapping the edge $(i,j)$ for the edge $(i,k)$.

At first glance, the sum basic could be seen as the model obtained from the sum classic when considering only deviations that consists in swapping individual edges. However,  in any Nash equilibrium for the sum classic, only one of the endpoints of any edge has bought that specific edge so that just one of the endpoints of the edge can perform a swap of that specific edge. Therefore, one must be careful when trying to translate a property or result from the sum basic to the sum classic. 

In the sum classic game it has been conjectured that the price of anarchy is constant (asymptotically) for any value of $\alpha$. Until now this conjecture has been proved true for $\alpha = O(n^{1-\epsilon})$ with $\epsilon \geq 1/\log n $ (\cite{demaine12}) and for $\alpha >9n$ (\cite{alvarez17}). In \cite{demaine12} it is proved that the price of anarchy is upper bounded by the diameter of any Nash equilibrium. This is why the diameter of equilibria in the sum basic is studied. 

In \cite{alon13}, the authors show that sufficiently large graph powers of an equilibrium graph in the sum basic model will result in distance-uniform graphs; if the critical distance is large, then the original equilibrium graph in the sum basic model imposed a high total cost on its nodes. In particular, it follows that if $\epsilon$-distance-uniform graphs had diameter $O(\log n)$, the diameter of equilibria for the sum basic would be at most $O(\log^3 n)$.

\subsection{Previous results on distance uniformity}

This application motivates the already natural question: in an $\epsilon$-distance-uniform graph with $n$ vertices and critical distance $d$, what is the relationship between the parameters $\epsilon$, $n$, and $d$? Specifically, can we derive an upper bound on $d$ in terms of $\epsilon$ and $n$? Up to a constant factor, this is equivalent to finding an upper bound on the diameter of the graph, which must be between $d$ and $2d$ as long as $\epsilon < \frac12$.

Random graphs provide one example of distance-uniform graphs. In \cite{bollobas81}, Bollob\'as shows that for sufficiently large $p = p(n)$, the diameter of the random Erd\H{o}s--R\'enyi random graph $\mathcal G_{n,p}$ is asymptotically almost surely concentrated on one of two values. In fact, from every vertex $v$ in $\mathcal G_{n,p}$, the breadth-first search tree expands by a factor of $O(np)$ at every layer, reaching all or almost all vertices after about $\log_r n$ steps. Such a graph is also expected to be distance-uniform: the biggest layer of the breadth-first search tree will be much bigger than all previous layers.

More precisely, suppose that we choose $p(n)$ so that the average degree $r = (n-1)p$ satisfies two criteria: that $r \gg (\log n)^3$, and that for some $d$, $r^d/n - 2 \log n$ approaches a constant $C$ as $n \to \infty$. Then it follows from Lemma~3 in \cite{bollobas81} that (with probability $1-o(1)$) for every vertex $v$ in $\mathcal G_{n,p}$, the number of vertices at each distance $k < d$ from $v$ is $O(r^k)$. It follows from Theorem~6 in \cite{bollobas81} that the number of vertex pairs in $\mathcal G_{n,p}$ at distance $d+1$ from each other is Poisson with mean $\frac12 e^{-C}$, so there are only $O(1)$ such pairs with probability $1-o(1)$. As a result, such a random graph is $\epsilon$-distance-uniform with $\epsilon = O(\frac{\log n}{r})$, and critical distance $d = \log_r n + O(1)$.

This example provides a compelling image of what distance-uniform graphs look like: if the breadth-first search tree from each vertex grows at the same constant rate, then most other vertices will be reached in the same step. In any graph that is distance-uniform for a similar reason, the critical distance $d$ will be at most logarithmic in $n$. In fact, Alon et al.\ conjecture that all distance-uniform graphs have diameter $O(\log n)$. 

Alon et al.\ prove an upper bound of $O(\frac{\log n}{\log \epsilon^{-1}})$ in a special case: for $\epsilon$-distance-uniform graphs with $\epsilon<\frac14$ that are Cayley graphs of Abelian groups. In this case, if $G$ is the Cayley graph of an Abelian group $A$ with respect to a generating set $S$, one form of Pl\"unnecke's inequality (see, e.g., \cite{tao06}) says that the sequence 
\[
	|\underbrace{S + S + \dots + S}_k|^{1/k}
\] 
is decreasing in $k$. Since $S, S+S, S+S+S, \dots$ are precisely the sets of vertices which can be reached by $1, 2, 3, \dots$ steps from 0, this inequality quantifies the idea of constant-rate growth in the breadth-first search tree; Theorem~15 in~\cite{alon13} makes this argument formal.

\subsection{Our results}

In this paper, we disprove Alon et al.'s conjecture by constructing distance-uniform graphs that do not share this behavior, and whose diameter is exponentially larger than these examples. We also prove an upper bound on the critical distance (and diameter) showing our construction to be best possible in one asymptotic sense. Specifically, we show the following two results:

\begin{theorem}
\label{thm:intro-upper}
In any $\epsilon$-distance-uniform graph with $n$ vertices, the critical distance $d$ satisfies
\[
	d = 2^{O\left(\frac{\log n}{\log \epsilon^{-1}}\right)}.
\]
\end{theorem}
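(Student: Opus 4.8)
The plan is to analyze the breadth-first search (BFS) layers from a fixed vertex $v$ and show that if the critical distance $d$ is large, then the layers must grow rather slowly, which contradicts the fact that at distance exactly $d$ we must have reached almost all of the $n$ vertices. Write $N_k$ for the number of vertices at distance exactly $k$ from $v$, and $B_k = N_0 + N_1 + \dots + N_k$ for the size of the ball of radius $k$. By distance-uniformity, $B_{d-1} \le \epsilon n + 1$ (everything before the critical layer) while $B_d \ge (1-\epsilon)n$, so the single layer $N_d$ is enormous compared to the entire ball $B_{d-1}$: we get $N_d \ge (1 - 2\epsilon)n \ge \tfrac12 B_{d-1}\cdot \tfrac1\epsilon$ roughly speaking. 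The key structural point is that a BFS layer cannot be too large relative to the previous layers unless the graph has many vertices: $N_{k+1}$ is bounded by the number of vertices adjacent to $N_k$, and one wants to argue that if consecutive layers keep multiplying by a large factor $t$, then within $O(\log_t n)$ steps one exhausts all $n$ vertices. So the heart of the matter is to \emph{lower bound} the growth rate that the layers are forced to have, and then play it off against the total vertex budget.

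First I would set up the dichotomy at each layer $k < d$: either the ball roughly doubles, $B_{k+1} \ge 2 B_k$, or it does not. Since $B_d \le n$, there can be at most $\log_2 n$ layers of the "doubling" type. For every other layer $k$ we have $B_{k+1} < 2B_k$, equivalently $N_{k+1} < B_k$, i.e. the new layer is smaller than everything seen so far. I would then argue that such "slow" layers force a bottleneck that can be exploited: if $N_{k+1} < B_k$, the set $N_{k+1}$ is a vertex cut separating $v$ from everything at distance $> k+1$, and it is small. Iterating the distance-uniformity condition from a vertex on the far side, or counting paths through these small cuts, should show that passing through many consecutive small cuts is impossible when $\epsilon$ is not too small — because from a vertex $w$ that is far from $v$, almost all vertices lie at distance exactly $d$ from $w$ as well, and the small cuts near $v$ would have to be traversed by too many of the shortest $w$-paths.

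Quantitatively, I expect the bound to come out as follows. Over the $d$ layers $0,1,\dots,d$, at most $\log_2 n$ can be doubling layers; the remaining $\ge d - \log_2 n$ layers are slow. I would show that a run of $\ell$ consecutive slow layers beginning at a ball of size $B$ keeps the ball size below $B \cdot f(\ell)$ for some function $f$ growing only like $\epsilon^{-O(\ell)}$ (each slow step multiplies by at most something like $1 + \epsilon^{-1}$ is too weak — rather, I'd want to show the accumulated growth over such a run is at most $n^{O(1/\log\epsilon^{-1})}$ per unit length, using that each small cut has size at most $\epsilon n$ and applying distance-uniformity at both endpoints). Combining, $n \ge B_d \ge$ (product of growth factors) forces $d \le \log_2 n + O\!\left(\frac{\log n}{\log \epsilon^{-1}}\right)$ additively, but the multiplicative structure of the recursion (balls sizes being squared at doubling steps) will actually yield the stated $d = 2^{O(\log n / \log \epsilon^{-1})}$ after unwinding the recursion.

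The main obstacle, and the step I would spend the most care on, is making the "slow layer" argument quantitatively tight: showing that a short cut of size $\le \epsilon n$ near $v$, combined with the distance-uniformity condition applied at a far vertex $w$, forces the number of consecutive slow layers to be $O\!\left(\frac{\log n}{\log \epsilon^{-1}}\right)$ rather than merely $O(\log n)$. This is where the $\log \epsilon^{-1}$ in the denominator must come from, presumably via an iterated application of distance-uniformity that gains a factor of $\epsilon^{-1}$ at each step, so that after $\log_{\epsilon^{-1}} n$ steps the bound on the cut sizes collides with the requirement that the $d$-th layer contains $(1-\epsilon)n$ vertices. Getting the bookkeeping of which vertex we apply distance-uniformity from, and how the small-cut estimates compose, is the crux.
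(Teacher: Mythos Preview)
Your proposal has a genuine gap: the layer-by-layer dichotomy between ``doubling'' and ``slow'' layers cannot yield the stated bound. If you really could show that every maximal run of consecutive slow layers has length $O(\log n / \log \epsilon^{-1})$, then together with the at most $\log_2 n$ doubling layers you would obtain $d = O\bigl((\log n)^2/\log\epsilon^{-1}\bigr)$, a polylogarithmic bound. But Theorem~\ref{thm:intro-lower} exhibits $\epsilon$-distance-uniform graphs with $d = 2^{\Omega(\log n/\log\epsilon^{-1})}$; for $\epsilon = 1/\log n$ this is $2^{\Omega(\log n/\log\log n)}$, far larger than any polylog. So the slow-layer bound you are hoping for is simply false. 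Your aside that ``ball sizes are squared at doubling steps'' is also not what doubling means, and there is no recursion in your dichotomy to ``unwind'' into the exponential form $2^{O(\cdot)}$.

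The idea you are missing is that the factor-$\epsilon^{-1}$ gain occurs not per BFS layer but per \emph{scale}. Writing $N_r(v)$ for the ball of radius $r$ about $v$, the key lemma is: if $|N_r(v)|\ge N$ for every $v$ and $2r+1\le d$, then $|N_{3r+1}(v)|\ge N\epsilon^{-1}$ for every $v$. The mechanism is a packing/covering argument. Fix $v$ and take a maximal set $w_1,\dots,w_t$ of vertices at distance exactly $2r+1$ from $v$ that are pairwise at distance at least $2r+1$. Every shortest $v$--$u$ path with $u$ at the critical distance $d$ passes through some vertex at distance $2r+1$ from $v$, which by maximality lies within $2r$ of some $w_i$; hence $d(w_i,u)\le d-1$ and $u\in N_{d-1}(w_i)$. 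Since the critical layer has size at least $(1-\epsilon)n$ while each $|N_{d-1}(w_i)|\le\epsilon n$, this forces $t\ge\epsilon^{-1}-1$. The $r$-balls around $v,w_1,\dots,w_t$ are pairwise disjoint and all lie inside $N_{3r+1}(v)$, giving $|N_{3r+1}(v)|\ge \epsilon^{-1}N$. Iterating from $r=1$ via $r\mapsto 3r+1$ for $\lfloor\log_3 d\rfloor$ steps yields $n\ge\epsilon^{-\lfloor\log_3 d\rfloor-1}$, hence $d\le 3^{\log n/\log\epsilon^{-1}}$. The exponential in the final bound comes from the radius \emph{tripling} at each iteration, which is exactly what a single-vertex, layer-by-layer analysis cannot see.
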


\begin{theorem}
\label{thm:intro-lower}
For any $\epsilon$ and $n$ with $\frac1n \le \epsilon \le \frac1{\log n}$, there exists an $\epsilon$-distance-uniform graph on $n$ vertices with critical distance
\[
	d = 2^{\Omega\left(\frac{\log n}{\log \epsilon^{-1}}\right)}.
\]
\end{theorem}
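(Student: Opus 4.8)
The plan is to build the graph by recursion: start from a small ``gadget'' and substitute it into itself a controlled number of times. The gadget $\Gamma$ will be a graph on $N$ vertices of diameter $2$ that is itself distance-uniform with a small parameter $\epsilon_\Gamma$ (so almost all pairs, including all pairs with a fixed ``port'' vertex, are at distance exactly $2$), in which every two non-adjacent vertices have many common neighbours, and which contains a set of port vertices that are pairwise at distance exactly $2$; a random graph $\mathcal{G}(N,p)$ with $p$ a suitable $\mathrm{polylog}(N)$ multiple of $\sqrt{(\log N)/N}$ works, with $\epsilon_\Gamma=\Theta(p)$, with $\mathrm{polylog}(N)$ common neighbours for every pair, and with any sufficiently small independent set as the ports. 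Given any graph $G$, define the blow-up $G\langle\Gamma\rangle$ by replacing each vertex of $G$ with a private copy of $\Gamma$ and realising each edge of $G$ as a single new edge joining a port of one copy to a port of the other, arranged so that two edges of $G$ that can appear consecutively on a shortest path use different ports at their common vertex. The effect is that passing from a generic vertex of the copy $\Gamma_u$ to a generic vertex of $\Gamma_w$ forces passage through about $\mathrm{dist}_G(u,w)$ intermediate copies, each crossed port-to-port at cost $\mathrm{diam}(\Gamma)$; hence $\mathrm{crit}(G\langle\Gamma\rangle)\approx\mathrm{diam}(\Gamma)\cdot\mathrm{crit}(G)$ while $|V(G\langle\Gamma\rangle)|=N\cdot|V(G)|$.

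Now iterate: set $G_0=\Gamma$ and $G_{i+1}=G_i\langle\Gamma\rangle$. After $t$ steps $|V(G_t)|=N^{\,t+1}$ and $\mathrm{crit}(G_t)=2^{\Theta(t)}$, so $\log\mathrm{crit}(G_t)=\Theta\!\bigl(\log|V(G_t)|/\log N\bigr)$. Taking $N=(\epsilon^{-1})^{\Theta(1)}$ turns this into $2^{\Theta(\log n/\log\epsilon^{-1})}$, and by tuning $N$ and $t$ (and, if needed, padding the final round so that $n$ need not be exactly $N^{t+1}$) one realises any prescribed $n$. The constraint on $N$ comes from needing $\epsilon_\Gamma$ small enough that the accumulated loss over the $t=\Theta(\log n/\log\epsilon^{-1})$ rounds stays below $\epsilon$; this forces $N\gtrsim t^2\epsilon^{-2}$, hence $\log N=2\log\epsilon^{-1}+O(\log t)$, and it is exactly the hypothesis $\epsilon\le\frac1{\log n}$ (equivalently $\log\epsilon^{-1}\ge\log\log n\ge\Omega(\log t)$) that keeps $\log N=\Theta(\log\epsilon^{-1})$. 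One also checks that $\mathrm{diam}(G_t)$ stays within a constant factor of $\mathrm{crit}(G_t)$, so few vertices are beyond the critical distance either.

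The hard part is controlling how the distance-uniformity parameter $\epsilon_i$ of $G_i$ degrades: we need something like $\epsilon_{i+1}\le (1+o(1))\,\epsilon_i+O(\epsilon_\Gamma)$, so that $\epsilon_t=O(t\,\epsilon_\Gamma)$, rather than $\epsilon_{i+1}\le C\epsilon_i$ with $C>1$, which would be fatal because $t$ is large. The troublesome sources are vertices lying close to a port of their own copy: toward targets on the far side of that port they see a distance off by a small amount, and since a copy has several ports and there are many levels, a naive union bound over ports costs a constant factor per level. The fix is the gadget's redundancy: because every pair of vertices of $\Gamma$ has many common neighbours, a source that is ``bad'' for one port can re-route through an alternative port while keeping its path the same length, so the only sources that actually see a wrong distance are the few that are central with respect to all of their ports at once — a set that can be bounded additively. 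Making this re-routing precise, and handling the analogous bookkeeping for target vertices and for pairs inside a common copy of $\Gamma$, is the technical core; everything else is parameter arithmetic of the kind sketched above.
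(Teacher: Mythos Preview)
Your recursive blow-up is the right idea and is, in fact, what the paper does: its Hanoi graph $G_{r,k}^*$ on $r^k$ vertices is $G_{r,k-1}^*$ with every vertex replaced by a copy of $K_r$ and one inter-copy edge added per old edge (equivalently, iterated truncation of a simplex). With the clique as gadget, every vertex of every copy is a port carrying exactly one external edge, and the parts you flag as hard disappear: a short induction on $k$ shows that two states $\vec a,\vec b\in\{1,\dots,r\}^k$ using disjoint sets of values are at distance exactly $2^k-1$, and since a $(1-k/r)^k\ge 1-k^2/r$ fraction of states have support disjoint from any fixed $\vec a$, this yields $\epsilon=k^2/r$ in one stroke, with no per-level degradation to track.

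Your random-graph gadget, as written, has a concrete obstruction. To keep the relation $\mathrm{crit}(G\langle\Gamma\rangle)\approx\mathrm{diam}(\Gamma)\cdot\mathrm{crit}(G)$ you must assign distinct ports to any two incident edges that can be consecutive on a shortest path, and in a diameter-$2$ graph that is essentially every pair; you also want the ports independent so they sit pairwise at distance~$2$. But in $\mathcal G(N,p)$ with $p=c\sqrt{(\log N)/N}$ the typical degree is $\sim c\sqrt{N\log N}$ while the independence number is only $\sim\sqrt{N\log N}/c$, and diameter~$2$ with ``many common neighbours'' forces $c$ well above~$1$ (you even wrote ``polylog''), so there are not enough ports; once ports are reused, a path that enters and leaves a copy through the same port pays $1$ per hop instead of $3$ and the recursion collapses. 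Separately, the re-routing sketch does not establish $\epsilon_{i+1}\le(1+o(1))\epsilon_i+O(\epsilon_\Gamma)$: a source adjacent to port $p$ is genuinely closer to every target whose shortest path leaves through $p$, and routing through a different port cannot lengthen a shortest path. The paper's cure is to choose the gadget so that these corrections are exactly zero rather than merely small.
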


Note that, since a $\frac1{\log n}$-distance-uniform graph is also $\frac12$-distance-uniform, Theorem~\ref{thm:intro-lower} also provides a lower bound of $d = 2^{\Omega(\frac{\log n}{\log \log n})}$ for any $\epsilon > \frac1{\log n}$.

Combined, these results prove that the maximum critical distance is $2^{\Theta(\frac{\log n}{\log \epsilon^{-1}})}$ whenever they both apply. A small gap remains for sufficiently large $\epsilon$: for example when $\epsilon$ is constant as $n \to \infty$. In this case, Theorem~\ref{thm:intro-upper} gives an upper bound on $d$ which is polynomial in $n$, while the lower bound of Theorem~\ref{thm:intro-lower} grows slower than any polynomial.

The family of graphs used to prove Theorem~\ref{thm:intro-lower} is interesting in its own right. We give two different interpretations of the underlying structure of these graphs. First, we describe a combinatorial game, generalizing the well-known Tower of Hanoi puzzle, whose transition graph is $\epsilon$-distance-uniform and has large diameter. Second, we give a geometric interpretation, under which each graph in the family is the skeleton of the convex hull of an arrangement of points  on a high-dimensional sphere.

\section{Upper bound}

\newcommand{\Nexact}[2]{\Gamma_{#1}(#2)}
\newcommand{\Natmost}[2]{N_{#1}(#2)}

For a vertex $v$ of a graph $G$, let $\Nexact{r}{v}$ denote the set $\{w \in V(G) \mid d(v,w) = r\}$: the vertices at distance exactly $r$ from $v$. In particular, $\Nexact{0}{v} = \{v\}$ and $\Nexact{1}{v}$ is the set of all vertices adjacent to $v$. Let
\[
	\Natmost{r}{v} = \bigcup_{i=0}^r \Nexact{i}{v}
\]
denote the set of vertices within distance at most $r$ from $v$.

Before proceeding to the proof of Theorem~\ref{thm:intro-upper}, we begin with a simple argument that is effective for an $\epsilon$ which is very small:

\begin{lemma}
\label{lemma:min-degree}
The minimum degree $\delta(G)$ of an $\epsilon$-distance-uniform graph $G$ satisfies $\delta(G) \ge \epsilon^{-1} - 1$.
\end{lemma}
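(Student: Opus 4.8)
The plan is to derive a contradiction from the assumption that some vertex $v$ has small degree. Suppose $G$ is $\epsilon$-distance-uniform with critical distance $d$, and suppose for contradiction that $\deg(v) = |\Nexact{1}{v}| \le \epsilon^{-1} - 2$, i.e. $|\Nexact{1}{v}| < \epsilon^{-1} - 1$. The key observation is that every vertex at distance exactly $d$ from $v$ must have a neighbor at distance $d-1$ from $v$, and more generally the ``shells'' $\Nexact{r}{v}$ are controlled by the degree of $v$ only at the very first step — so instead I would look at the structure from the point of view of a neighbor of $v$ or from $v$ itself in a more clever way. Actually the cleanest route: consider the at most $\epsilon n$ vertices \emph{not} at distance $d$ from $v$; call this exceptional set $X_v$, with $v \in X_v$ and $|X_v| \le \epsilon n$. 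Every vertex $w \notin X_v$ satisfies $d(v,w) = d \ge 1$, hence $w$ is not adjacent to $v$ unless $d = 1$; if $d = 1$ then $G$ is within $\epsilon n$ of complete and the degree bound is immediate, so assume $d \ge 2$.

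Now the main step: take any neighbor $u$ of $v$. Since $d(v,u) = 1 \ne d$ (as $d \ge 2$), we have $u \in X_v$. So \emph{all} of $v$'s neighbors lie in the exceptional set $X_v$, giving $\deg(v) \le |X_v| - 1 \le \epsilon n - 1$ — but this is a bound in terms of $n$, not $\epsilon^{-1}$, so it is not yet what we want. To get the $\epsilon^{-1}$ bound I would instead count vertices reachable from $v$: every vertex $w$ with $d(v,w) = d$ has a neighbor on a shortest $v$–$w$ path lying in $\Nexact{d-1}{v}$, and iterating, one can bound $|\Nexact{d}{v}|$ by a product of degrees along shells. The sharpest version uses that $v$ has $\deg(v)$ neighbors, and the number of vertices at distance $d$ is at most $n-1$ but at least $n - \epsilon n - 1 = (1-\epsilon)n - 1$; meanwhile consider a vertex $u \in \Nexact{1}{v}$ and its own exceptional set $X_u$. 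A vertex $w$ with $d(v,w) = d$ and $d(u,w) = d$ is at distance $d$ from \emph{both}; but $d(u,w) \le d(v,w) + 1 = d+1$ and $d(u,w) \ge d(v,w) - 1 = d-1$ always, so this is consistent. The crucial pigeonhole: for \emph{each} vertex $w$ at distance $d$ from $v$, a shortest path reaches $w$ through one of the $\deg(v)$ neighbors of $v$; after one step from $v$ we are at some $u \in \Nexact{1}{v}$, and $w$ must be at distance $d-1$ from that $u$. Hence $w \in \bigcup_{u \in \Nexact{1}{v}} \Nexact{d-1}{u}$. For each such $u$, the set $\Nexact{d-1}{u}$ has size at most $n - 1 - |\{x : d(u,x) = d, x \ne \text{stuff}\}|$; more usefully, since $u$ is $\epsilon$-distance-uniform-respecting, all but $\epsilon n$ vertices are at distance exactly $d$ from $u$, so $|\Nexact{d-1}{u}| \le \epsilon n$. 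Therefore $(1-\epsilon)n - 1 \le |\Nexact{d}{v}| \le \deg(v)\cdot \epsilon n$, which rearranges to $\deg(v) \ge \frac{(1-\epsilon)n - 1}{\epsilon n} = \epsilon^{-1} - 1 - \epsilon^{-1}/n \ge \epsilon^{-1} - 2$ roughly; tightening the bookkeeping (noting $w \ne v$ and handling the $-1$ terms, and that $v$ itself is excluded) should yield exactly $\deg(v) \ge \epsilon^{-1} - 1$.

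I expect the main obstacle to be the precise accounting of the additive constants — making sure the off-by-one terms ($v$ itself, $u$ itself, vertices counted in multiple $\Nexact{d-1}{u}$) combine to give exactly $\epsilon^{-1} - 1$ rather than something slightly weaker, and cleanly dispatching the degenerate case $d \le 1$. The inequality $|\Nexact{d-1}{u}| \le \epsilon n$ for every vertex $u$ is the heart of the argument: it says the shell just before the critical one is small, which is exactly the content of distance-uniformity applied at $u$. Once that is in hand, the covering $\Nexact{d}{v} \subseteq \bigcup_{u \sim v} \Nexact{d-1}{u}$ together with $|\Nexact{d}{v}| \ge n - \epsilon n - 1$ finishes it by a one-line division.
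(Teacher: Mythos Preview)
Your final argument is correct and is essentially the paper's proof: cover $\Nexact{d}{v}$ by the sets $\Nexact{d-1}{u}$ as $u$ ranges over the neighbors of $v$, note each such set has size at most $\epsilon n$ by distance-uniformity at $u$, and divide. The paper phrases this via a BFS tree rooted at $v$, assigning each vertex of $\Nexact{d}{v}$ to the unique neighbor of $v$ it descends from (a partition rather than a cover), but the inequality used is identical.

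Two minor remarks. First, the exploratory detours (the exceptional set $X_v$, the separate handling of $d=1$) are unnecessary; the covering argument already handles all cases uniformly once $d\ge 1$. Second, your worry about the additive constants is exactly the one loose thread in the paper too: the paper simply takes $|\Nexact{d}{v}|\ge (1-\epsilon)n$ and obtains $\deg(v)\ge \frac{(1-\epsilon)n}{\epsilon n}=\epsilon^{-1}-1$ on the nose, so no further tightening is needed beyond adopting that convention for the lower bound on $|\Nexact{d}{v}|$.
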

\begin{proof}
Suppose that $G$ is $\epsilon$-distance-uniform, $n$ is the number of vertices of $G$, and $d$ is the critical distance: for any vertex $v$, at least $(1-\epsilon)n$ vertices of $G$ are at distance exactly $d$ from $v$.

Let $v$ be an arbitrary vertex of $G$, and fix an arbitrary breadth-first search tree $T$, rooted at $v$. We define the \emph{score} of a vertex $w$ (relative to $T$) to be the number of vertices at distance $d$ from $v$ which are descendants of $w$ in the tree $T$.

There are at least $(1-\epsilon)n$ vertices at distance $d$ from $v$, and all of them are descendants of some vertex in the neighborhood $\Nexact{1}{v}$. Therefore the total score of all vertices in $\Nexact{1}{v}$ is at least $(1-\epsilon)n$.

On the other hand, if $w \in \Nexact{1}{v}$, each vertex counted by the score of $w$ is at distance $d-1$ from $w$. Since at least $(1-\epsilon)n$ vertices are at distance $d$ from $w$, at most $\epsilon n$ vertices are at distance $d-1$, and therefore the score of $w$ is at most $\epsilon n$.

In order for $|\Nexact{1}{v}|$ scores of at most $\epsilon n$ to sum to at least $(1-\epsilon)n$, $|\Nexact{1}{v}|$ must be at least $\frac{(1-\epsilon)n}{\epsilon n} = \epsilon^{-1} - 1$.
\end{proof}

This lemma is enough to show that in a $\frac1{\sqrt n}$-distance-uniform graph, the critical distance is at most $2$. Choose a vertex $v$: all but $\sqrt n$ of the vertices of $G$ are at the critical distance $d$ from $v$, and $\sqrt n - 1$ of the vertices are at distance $1$ from $v$ by Lemma~\ref{lemma:min-degree}. The remaining uncounted vertex is $v$ itself. It is impossible to have $d \ge 3$, as that would leave no vertices at distance $2$ from $v$.

For larger $\epsilon$, the bound of Lemma~\ref{lemma:min-degree} becomes ineffective, but we can improve it by a more general argument of which Lemma~\ref{lemma:min-degree} is just a special case.

\begin{lemma}
\label{lemma:arnau}
	Let $G$ be an $\epsilon$-distance-uniform graph with critical distance $d$. Suppose that for some $r$ with $2r+1 \le d$, we have $|\Natmost{r}{v}| \ge N$ for each $v \in V(G)$. Then we have $|\Natmost{3r+1}{v}| \ge N\epsilon^{-1}$ for each $v \in V(G)$.
\end{lemma}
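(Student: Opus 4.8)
The plan is to fix a vertex $v$ and produce roughly $\epsilon^{-1}$ vertices lying within distance $2r+1$ of $v$ and pairwise at distance more than $2r$; their radius-$r$ balls are then pairwise disjoint, each of size at least $N$ by hypothesis, and each contained in $\Natmost{3r+1}{v}$, so multiplying gives $|\Natmost{3r+1}{v}| \ge N\epsilon^{-1}$. To set this up I would let $D = \Nexact{d}{v}$, so that $|D| \ge (1-\epsilon)n$ by distance-uniformity, and for each $u \in D$ fix a shortest path from $v$ to $u$ and let $\phi(u)$ be its vertex in position $2r+1$. Since $2r+1 \le d$ and every subpath of a shortest path is itself shortest, $d(v,\phi(u)) = 2r+1$ and $d(\phi(u),u) = d-(2r+1)$; in particular $\phi(u) \in \Natmost{2r+1}{v}$.

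Next I would let $X$ be a maximal (under inclusion) subset of $\Natmost{2r+1}{v}$ that contains $v$ and has all pairwise distances at least $2r+1$. Two of the three desired properties are then immediate: for distinct $x,x' \in X$, $\Natmost{r}{x} \cap \Natmost{r}{x'} = \emptyset$ because $d(x,x') > 2r$, and $\Natmost{r}{x} \subseteq \Natmost{3r+1}{v}$ because $d(v,x) \le 2r+1$; and $|\Natmost{r}{x}| \ge N$ is exactly the hypothesis. So the whole statement reduces to showing $|X| \ge \epsilon^{-1}$.

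Establishing $|X| \ge \epsilon^{-1}$ is the crux. By maximality of $X$, every vertex of $\Natmost{2r+1}{v}$ is within distance $2r$ of some element of $X$; I would apply this to each $\phi(u)$, $u \in D$, noting that the witnessing element is some $x \in X$ with $x \ne v$ because $d(v,\phi(u)) = 2r+1 > 2r$. The triangle inequality then gives $d(x,u) \le d(x,\phi(u)) + d(\phi(u),u) \le 2r + (d-2r-1) = d-1$, so $u \in \Natmost{d-1}{x}$, and hence $D \subseteq \bigcup_{x \in X \setminus \{v\}} \Natmost{d-1}{x}$. For every vertex $x$, the set $\Natmost{d-1}{x}$ is disjoint from $\Nexact{d}{x}$ and therefore has at most $\epsilon n$ vertices, so $(1-\epsilon)n \le |D| \le (|X|-1)\,\epsilon n$, which gives $|X| \ge 1 + (1-\epsilon)\epsilon^{-1} = \epsilon^{-1}$ — modulo the same harmless additive rounding in the $\epsilon n$ bound that is absorbed in the proof of Lemma~\ref{lemma:min-degree}.

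The step I expect to be the obstacle is exactly this lower bound on $|X|$. A naive approach would instead try to show that $\Natmost{2r+1}{v}$ is itself large, but it need not be — when $d$ is large this ball can contain as few as about $\epsilon n$ vertices — and in any case a large sphere need not contain a large $2r$-separated subset. The point that makes it work is that a maximal $2r$-net of $\Natmost{2r+1}{v}$ must $2r$-dominate the vertices sitting at position $2r+1$ on shortest paths into $D$, while distance-uniformity caps the number of members of $D$ that any single dominating vertex can reach within distance $d-1$ at $\epsilon n$; this is what converts ``$D$ is almost all of $V$'' into ``$X$ is large.'' Everything else is bookkeeping.
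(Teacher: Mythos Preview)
Your argument is correct and essentially identical to the paper's: both pick a maximal $(2r{+}1)$-separated set at radius $2r{+}1$ from $v$, use the $(2r{+}1)$-th vertex on shortest $v$--$u$ paths to show this set (minus $v$) must $d{-}1$-cover $\Nexact{d}{v}$, deduce the set has at least $\epsilon^{-1}$ elements including $v$, and pack their disjoint $r$-balls into $\Natmost{3r+1}{v}$. The only cosmetic difference is that the paper picks its separated set from $\Nexact{2r+1}{v}$ and adjoins $v$ afterward, whereas you work in $\Natmost{2r+1}{v}$ with $v$ included from the start.
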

\begin{proof}	
Let $v$ be any vertex of $G$, and let $\{w_1, w_2, \dots, w_t\}$ be a maximal collection of vertices in $\Nexact{2r+1}{v}$ such that $d(w_i, w_j) \ge 2r+1$ for each $i \ne j$ with $1 \le i,j \le t$.

We claim that for each vertex $u \in \Nexact{d}{v}$---for each vertex $u$ at the critical distance from $v$---there is some $i$ with $1 \le i \le t$ such that $u \in \Natmost{d-1}{w_i}$. To see this, consider any shortest path from $v$ to $u$, and let $u_\pi \in \Nexact{2r+1}{v}$ be the $(2r+1)$\textsuperscript{th} vertex along this path. (Here we use the assumption that $2r+1 \le d$.) From the maximality of $\{w_1, w_2, \dots, w_t\}$, it follows that $d(w_i, u_\pi) \le 2r$ for some $i$ with $1 \le i \le t$. But then,
\[
	d(w_i, u) \le d(w_i, u_\pi) + d(u_\pi, u) \le 2r + (d - 2r-1) = d-1.
\]
So $u \in \Natmost{d-1}{w_i}$.

To state this claim differently, the sets $\Natmost{d-1}{w_1}, \dots, \Natmost{d-1}{w_t}$ together cover $\Nexact{d}{v}$. These sets are all small while the set they cover is large, so there must be many of them:
\[
	(1-\epsilon)n \le |\Nexact{d}{v}| \le \sum_{i=1}^t |\Natmost{d-1}{w_i}| \le \sum_{i=1}^t \epsilon n = t \epsilon n,
\]
which implies that $t \ge \frac{(1-\epsilon)n}{\epsilon n} = \epsilon^{-1} - 1$.

The vertices $v, w_1, w_2, \dots, w_t$ are each at distance at least $2r+1$ from each other, so the sets $\Natmost{r}{v}, \Natmost{r}{w_1}, \dots, \Natmost{r}{w_t}$ are disjoint. 

By the hypothesis of this lemma, each of these sets has size at least $N$, and we have shown that there are at least $\epsilon^{-1}$ sets. So their union has size at least $N\epsilon^{-1}$. Their union is contained in $\Natmost{3r+1}{v}$, so we have $|\Natmost{3r+1}{v}| \ge N\epsilon^{-1}$, as desired.
\end{proof}

We are now ready to prove Theorem~\ref{thm:intro-upper}. The strategy is to realize that the lower bounds on $|\Natmost{r}{v}|$, which we get from Lemma~\ref{lemma:arnau}, are also lower bounds on $n$, the number of vertices in the graph. By applying Lemma~\ref{lemma:arnau} iteratively for as long as we can, we can get a lower bound on $n$ in terms of $\epsilon$ and $d$, which translates into an upper bound on $d$ in terms of $\epsilon$ and $n$.

More precisely, set $r_1 = 1$ and $r_k = 3r_{k-1} + 1$, a recurrence which has closed-form solution $r_k = \frac{3^k - 1}{2}$. Lemma~\ref{lemma:min-degree} tells us that in an $\epsilon$-distance-uniform graph $G$ with critical distance $d$, $\Natmost{r_1}{v} \ge \epsilon^{-1}$. Lemma~\ref{lemma:arnau} is the inductive step: if, for all $v$, $\Natmost{r_k}{v} \ge \epsilon^{-k}$, then $\Natmost{r_{k+1}}{v} \ge \epsilon^{-(k+1)}$, as long as $2r_k + 1 \le d$.

The largest $k$ for which $2r_k + 1 \le d$ is $k = \floor{\log_3 d}$. So we can inductively prove that
\[
	n \ge \Natmost{r_{k+1}}{v} \ge \epsilon^{-(\floor{\log_3 d} + 1)}
\]
which can be rearranged to get
\[
	\frac{\log n}{\log \epsilon^{-1}} -1 \ge \floor{\log_3 d}.
\]
This implies that
\[
	d \le 3^{\frac{\log n}{\log \epsilon^{-1}}} = 2^{O\left(\frac{\log n}{\log \epsilon^{-1}}\right)},
\]
proving Theorem~\ref{thm:intro-upper}.

\section{Lower bound}

To show that this bound on $d$ is tight, we need to construct an $\epsilon$-distance-uniform graph with a large critical distance $d$. We do this by defining a puzzle game whose state graph has this property.

\subsection{The Hanoi game}

We define a \emph{Hanoi state} to be a finite sequence of nonnegative integers $\vec x = (x_1, x_2, \dots, x_k)$ such that, for all $i > 1$, $x_i \ne x_{i-1}$. Let 
\[
	\cH_{r,k} = \big\{ \vec x \in \{0,1,\dots, r\}^k : \vec x \mbox{ is a Hanoi state}\big\}.
\]
For convenience, we also define a \emph{proper Hanoi state} to be a Hanoi state $\vec x$ with $x_1 \ne 0$, and $\cH_{r,k}^* \subset \cH_{r,k}$ to be the set of all proper Hanoi states. While everything we prove will be equally true for Hanoi states and proper Hanoi states, it is more convenient to work with $\cH_{r,k}^*$, because $|\cH_{r,k}^*| = r^k$.

In the \emph{Hanoi game on $\cH_{r,k}$}, an initial state $\vec a \in \cH_{r,k}$ and a final state $\vec b \in \cH_{r,k}$ are chosen. The state $\vec a$ must be transformed into $\vec b$ via a sequence of moves of two types:
\begin{enumerate}
\item An \emph{adjustment} of $\vec x \in \cH_{r,k}$ changes $x_k$ to any value in $\{0,1,\dots, r\}$ other than $x_{k-1}$. For example, $(1,2,3,4)$ can be changed to $(1,2,3,0)$ or $(1,2,3,5)$, but not $(1,2,3,3)$.

\item An \emph{involution} of $\vec x \in \cH_{r,k}$ finds the longest tail segment of $\vec x$ on which the values $x_k$ and $x_{k-1}$ alternate, and swaps $x_k$ with $x_{k-1}$ in that segment. For example, $(1,2,3,4)$ can be changed to $(1,2,4,3)$, or $(1,2,1,2)$ to $(2,1,2,1)$.
\end{enumerate}

We define the Hanoi game on $\cH_{r,k}^*$ in the same way, but with the added requirement that all states involved should be proper Hanoi states. This means that involutions (or, in the case of $k=1$, adjustments) that would change $x_1$ to $0$ are forbidden.

The name ``Hanoi game'' is justified because its structure is similar to the structure of the classical Tower of Hanoi puzzle. In fact, though we have no need to prove this, the Hanoi game on $\cH_{3,k}^*$ is isomorphic to a Tower of Hanoi puzzle with $k$ disks. 

It is well-known that the $k$-disk Tower of Hanoi puzzle can be solved in $2^k-1$ moves, moving a stack of $k$ disks from one peg to another. In \cite{hinz92}, a stronger statement is shown: only $2^k-1$ moves are required to go from any initial state to any final state. A similar result holds for the Hanoi game on $\cH_{r,k}$:

\begin{lemma}
\label{lemma:hanoi-diameter}
The Hanoi game on $\cH_{r,k}$ (or $\cH_{r,k}^*$) can be solved in at most $2^k-1$ moves for any initial state $\vec a$ and final state $\vec b$.
\end{lemma}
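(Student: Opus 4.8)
The plan is to induct on $k$. The base case $k = 1$ is immediate: a single adjustment changes $x_1$ to any admissible value, so one move (which equals $2^1 - 1$) always suffices. For the inductive step I would treat the first $k-1$ coordinates of a state as a state of the Hanoi game on $\cH_{r,k-1}$ (resp.\ $\cH_{r,k-1}^*$), and use the last coordinate $x_k$ as a scratch register. By the inductive hypothesis there is a sequence of at most $2^{k-1}-1$ moves of the length-$(k-1)$ game carrying the prefix $(a_1,\dots,a_{k-1})$ to $(b_1,\dots,b_{k-1})$.

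The crux is a \emph{simulation claim}: each non-trivial move of the length-$(k-1)$ game on the prefix can be performed by at most two moves of the length-$k$ game. To simulate a prefix adjustment that resets $x_{k-1}$ to a value $c$ (so $c \ne x_{k-2}$ and $c \ne x_{k-1}$), I first adjust $x_k := c$ and then apply an involution; since $c \ne x_{k-2}$, the maximal alternating tail of the current sequence has length exactly $2$, so the involution just interchanges $x_{k-1}$ and $x_k$ and leaves the prefix as wanted. To simulate a prefix involution with maximal alternating tail $(x_j,\dots,x_{k-1})$, alternating between $\alpha := x_{k-1}$ and $\beta := x_{k-2}$, I first adjust $x_k := \beta$, which prolongs the alternating run by one coordinate, and then apply an involution; maximality of the prefix tail forces the new maximal alternating tail of the full sequence to be exactly $(x_j,\dots,x_k)$, so this involution duplicates the prefix involution on coordinates $1,\dots,k-1$ (and also flips $x_k$). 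After all prefix moves have been simulated, coordinates $1,\dots,k-1$ spell $(b_1,\dots,b_{k-1})$, and one last adjustment sets $x_k := b_k$, which is legal because $b_k \ne b_{k-1}$. In total this uses at most $2(2^{k-1}-1) + 1 = 2^k - 1$ moves. There is some routine bookkeeping to confirm: every configuration produced along the way is a genuine Hanoi state (involutions preserve the alternation constraint, and the buffering adjustments are picked to respect it), and, in the proper game, $x_1$ never drops to $0$, since the only simulated moves touching $x_1$ are involutions reaching the first coordinate, which shadow prefix involutions carried out inside $\cH_{r,k-1}^*$.

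The step I expect to be the main obstacle is making the simulation claim \emph{tight}. A naive simulation of a prefix involution would cost three moves, turning the final bound into something like $3^k$ rather than $2^k$; keeping it at two relies essentially on the maximality clauses in both move definitions --- ``other than $x_{k-2}$'' for adjustments, ``longest tail segment'' for involutions --- so that the single auxiliary adjustment of $x_k$ tunes the alternating tail to exactly the length required for the ensuing involution to act on precisely the intended coordinates, no more and no fewer. Checking that these conditions behave as claimed in the small cases (prefixes of length $1$ or $2$, alternating tails reaching the first coordinate) is where the care is needed.
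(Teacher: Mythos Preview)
Your argument is correct and yields the same $2^k-1$ bound, but it proceeds by a different recursion than the paper's. The paper recurses on the \emph{suffix} $(x_2,\dots,x_k)$, strengthening the induction hypothesis with the auxiliary condition that every intermediate state has $x_1\in\{a_1,b_1\}$; the case $a_1=b_1$ is then handled by a direct appeal to the inductive hypothesis on the suffix (the auxiliary condition ensures that suffix involutions, which can only swap the values $a_2$ and $b_2$, never spill over into coordinate~$1$), while the case $a_1\ne b_1$ routes through the fully alternating state $(a_1,b_1,a_1,b_1,\dots)$, applies one global involution to reach $(b_1,a_1,b_1,a_1,\dots)$, and finishes recursively. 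You instead recurse on the \emph{prefix} $(x_1,\dots,x_{k-1})$ and use $x_k$ as a buffer, trading the case split and the strengthened hypothesis for your two-move simulation lemma. Your route is more uniform (no case analysis on $a_1$ versus $b_1$) and makes the proper-state invariant $x_1\ne 0$ essentially automatic, since coordinate~$1$ is only ever touched by involutions that shadow valid moves of $\cH_{r,k-1}^*$; the paper's route, on the other hand, avoids having to verify the tail-length computations that your simulation claim rests on.
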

\begin{proof}
We induct on $k$ to show the following stronger statement: for any initial state $\vec a$ and final state $\vec b$, a solution of length at most $2^k-1$ exists for which any intermediate state $\vec x$ has $x_1 = a_1$ or $x_1 = b_1$. This auxiliary condition also means that if $\vec a, \vec b \in \cH_{r,k}^*$, all intermediate states will also stay in $\cH_{r,k}^*$.

When $k=1$, a single adjustment suffices to change $\vec a$ to $\vec b$, which satisfies the auxiliary condition. 

For $k>1$, there are two possibilities when changing $\vec a $ to $\vec b$:
\begin{itemize}
\item If $a_1 = b_1$, then consider the Hanoi game on $\cH_{r,k-1}$ with initial state $(a_2, a_3, \dots, a_k)$ and final state $(b_2, b_3, \dots, b_k)$. By the inductive hypothesis, a solution using at most $2^{k-1} - 1$ moves exists. 

Apply the same sequence of adjustments and involutions in $\cH_{r,k}$ to the initial state $\vec a$. This has the effect of changing the last $k-1$ entries of $\vec a$ to $(b_2, b_3, \dots, b_k)$. To check that we have obtained $\vec b$, we need to verify that the first entry is left unchanged.

The auxiliary condition of the inductive hypothesis tells us that all intermediate states have $x_2 = a_2$ or $x_2 = b_2$. Any move that leaves $x_2$ unchanged also leaves $x_1$ unchanged. A move that changes $x_2$ must be an involution swapping the values $a_2$ and $b_2$; however, $x_1 = a_1 \ne a_2$, and $x_1 = b_1 \ne b_2$, so such an involution also leaves $x_1$ unchanged.

Finally, the new auxiliary condition is satisfied, since we have $x_1 = a_1 = b_1$ for all intermediate states.

\item If $a_1 \ne b_1$, begin by taking $2^{k-1}-1$ moves to change $\vec a$ to $(a_1, b_1, a_1, b_1, \dots)$ while satisfying the auxiliary condition, as in the first case.

An involution takes this state to $(b_1, a_1, b_1, a_1, \dots)$; this continues to satisfy the auxiliary condition.

Finally, $2^{k-1}-1$ more moves change this state to $\vec b$, as in the first case, for a total of $2^k-1$ moves.\qedhere
\end{itemize}
\end{proof}

If we obtain the same results as in the standard Tower of Hanoi puzzle, why use the more complicated game in the first place? The reason is that in the classical problem, we cannot guarantee that any starting state would have a final state $2^k-1$ moves away. With the rules we define, as long as the parameters are chosen judiciously, each state $\vec a \in \cH_{r,k}$ is part of many pairs $(\vec a, \vec b)$ for which the Hanoi game requires $2^k-1$ moves to solve. 

The following lemma almost certainly does not characterize such pairs, but provides a simple sufficient condition that is strong enough for our purposes.

\begin{lemma}
\label{lemma:hanoi-game}
The Hanoi game on $\cH_{r,k}$ (or $\cH_{r,k}^*$) requires exactly $2^k-1$ moves to solve if $\vec a$~and~$\vec b$ are chosen with disjoint support: that is, $a_i \ne b_j$ for all $i$ and $j$.
\end{lemma}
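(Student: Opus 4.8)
The plan is to prove, by induction on $k$, a statement slightly stronger than the lemma: \emph{for any $\vec a,\vec b\in\cH_{r,k}$ with $a_1\notin\operatorname{supp}(\vec b)$ and $b_1\notin\operatorname{supp}(\vec a)$ — in particular $a_1\neq b_1$ — the Hanoi game on $\cH_{r,k}$ requires at least $2^k-1$ moves}, where $\operatorname{supp}(\vec x)=\{x_1,\dots,x_k\}$. Disjoint support implies both containments, and the $\cH^*_{r,k}$ game is only more constrained, so together with the matching upper bound of Lemma~\ref{lemma:hanoi-diameter} this gives the lemma. The reason to strengthen the hypothesis in exactly this way is that the sub-instances produced by the recursion need not have disjoint support even when $\vec a$ and $\vec b$ do (the entries of $\vec a$ may repeat), whereas they will satisfy the weaker ``first-coordinate'' condition.

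The base case $k=1$ is immediate since the two states differ. For the inductive step, fix a shortest solution. Because $a_1\neq b_1$, the first coordinate changes at least once, and the key structural fact is that the only move changing the first coordinate is an involution whose alternating tail is all of $\vec x$; such a move carries a fully alternating state $(c,c',c,c',\dots)$ to $(c',c,c',c,\dots)$, i.e.\ it transposes the first two coordinates and reflects the alternation. Hence if the first coordinate takes the successive values $a_1=c_0,c_1,\dots,c_q=b_1$, then just before the $j$-th change the state is $(c_{j-1},c_j,c_{j-1},\dots)$ and just after it is $(c_j,c_{j-1},c_j,\dots)$. A second tool is that deleting the first coordinate maps every move of $\cH_{r,k}$ to a move of $\cH_{r,k-1}$ on the last $k-1$ coordinates (adjustments go to adjustments, and an involution goes to an involution or, when its alternating tail has length two, to an adjustment), so any stretch of the solution projects to a solution of the corresponding game on $\cH_{r,k-1}$.

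The heart of the argument is the case $q=1$: the first coordinate changes once, from $a_1$ to $b_1$, at some move $t$; before move $t$ it is frozen at $a_1$ with the state at time $t-1$ equal to $(a_1,b_1,a_1,b_1,\dots)$, and after move $t$ it is frozen at $b_1$ with the state at time $t$ equal to $(b_1,a_1,b_1,a_1,\dots)$. Deleting the first coordinate, moves $1,\dots,t-1$ give a solution on $\cH_{r,k-1}$ from $(a_2,\dots,a_k)$ to $(b_1,a_1,b_1,\dots)$, and moves $t+1,\dots,m$ give one from $(a_1,b_1,a_1,\dots)$ to $(b_2,\dots,b_k)$. Both meet the inductive hypothesis: for the first, the target's support is $\{a_1,b_1\}$, its source's first coordinate $a_2$ differs from $a_1$ (Hanoi condition) and from $b_1$ (since $b_1\notin\operatorname{supp}(\vec a)$), and its target's first coordinate $b_1$ lies outside $\operatorname{supp}(a_2,\dots,a_k)\subseteq\operatorname{supp}(\vec a)$; the second sub-instance is symmetric. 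So each outer stretch costs at least $2^{k-1}-1$, and together with move $t$ we get $2(2^{k-1}-1)+1=2^k-1$.

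It remains to handle $q\geq 2$, which I expect to be the main obstacle. One route is to argue that a shortest solution may be taken with $q=1$ — intuitively, once the first coordinate has left $a_1$ it should never return to a value of $\operatorname{supp}(\vec a)$, and such a detour is wasteful — thereby reducing to the previous case. Alternatively, one cuts the solution at the first change after which the first coordinate exits $\operatorname{supp}(\vec a)$ and at the last change after which it still lies outside $\operatorname{supp}(\vec b)$; disjointness of the two supports forces these two cuts to occur in the correct order with at least one move strictly between them, again producing three stretches whose outer two live in $\cH_{r,k-1}$. The delicate point is verifying the inductive hypothesis on those outer stretches — in particular ruling out that the value held by the first coordinate immediately before it exits a support coincides with the relevant second coordinate, which could happen if the first coordinate ``wanders'' among the values of that support first. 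Pinning this down, by a further descent into the recursion or by the no-detour argument above, and confirming that disjointness of the supports is precisely what keeps the two cuts compatible, is where the real work of the proof lies.
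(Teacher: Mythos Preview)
Your $q=1$ case is correct, but the $q\geq 2$ case is a genuine gap, not missing routine detail. Both routes you sketch run into the same obstruction: under your strengthened hypothesis only $a_1$ and $b_1$ are controlled, so the intermediate first-coordinate values $c_1,\dots,c_{q-1}$ may land anywhere---in particular inside $\operatorname{supp}(\vec b)$ or $\operatorname{supp}(\vec a)$---and the sub-instances produced by your proposed cuts need not satisfy the inductive hypothesis. The minimality/no-detour idea does not obviously rescue this either: choosing a minimal pair only tells you that the state just after the first full involution fails your hypothesis with respect to $\vec b$ (else you could shorten), which is the opposite of what you need, and it certainly does not force $q=1$. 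As written, the induction does not close.

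The paper sidesteps the difficulty by projecting in the opposite direction: it drops the \emph{last} coordinate (the ``abbreviation'' $\vec x'=(x_1,\dots,x_{k-1})$) rather than the first. Under abbreviation an adjustment becomes the identity, while every involution becomes a single move of $\cH_{r,k-1}$. Now observe that in a shortest path no two consecutive moves are of the same type, and---after passing to a pair $(\vec a,\vec b)$ of disjoint support with minimum distance---the first and last moves must be adjustments, since an initial or final involution would produce a closer pair with the same support. Hence the involutions alone, abbreviated, form a walk from $\vec a'$ to $\vec b'$ in $\cH_{r,k-1}$; these still have disjoint support, so induction gives at least $2^{k-1}-1$ involutions, and the alternation forces $2^{k-1}$ adjustments, totalling $2^k-1$. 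Counting by move type rather than by first-coordinate changes is exactly what makes the $q\geq 2$ problem disappear.
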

\begin{proof}
Since Lemma~\ref{lemma:hanoi-diameter} proved an upper bound of $2^k-1$ for all pairs $(\vec a, \vec b)$, we only need to prove a lower bound in this case.

Once again, we induct on $k$. When $k=1$, a single move is necessary to change $\vec a$ to $\vec b$ if $\vec a \ne \vec b$, verifying the base case.

Consider a pair $\vec a, \vec b \in \cH_{r,k}$ with disjoint support, for $k > 1$. Moreover, assume that $\vec a$ and $\vec b$ are chosen so that, of all pairs with disjoint support, $\vec a$ and $\vec b$ require the least number of moves to solve the Hanoi game. (Since we are proving a lower bound on the number of moves necessary, this assumption is made without loss of generality.)

In a shortest path from $\vec a$ to $\vec b$, every other move is an adjustment: if there were two consecutive adjustments, the first adjustment could be skipped, and if there were two consecutive involutions, they would cancel out and both could be omitted. Moreover, the first move is an adjustment: if we began with an involution, then the involution of $\vec a$ would be a state closer to $\vec b$ yet still with disjoint support to $\vec b$, contrary to our initial assumption. By the same argument, the last move must be an adjustment.

Given a state $\vec x \in \cH_{r,k}$, let its \emph{abbreviation} be $\vec x' = (x_1, x_2, \dots, x_{k-1}) \in \cH_{r,k-1}$. An adjustment of $\vec x$ has no effect on $\vec x'$, since only $x_k$ is changed. If $x_k \ne x_{k-2}$, then an involution of $\vec x$ is an adjustment of $\vec x'$, changing its last entry $x_{k-1}$ to $x_k$. Finally, if $x_k = x_{k-2}$, then an involution of $\vec x$ is also an involution of $\vec x'$. 

Therefore, if we take a shortest path from $\vec a$ to $\vec b$, omit all adjustments, and then abbreviate all states, we obtain a solution to the Hanoi game on $\cH_{r,k-1}$ that takes $\vec a'$ to $\vec b'$. By the inductive hypothesis, this solution contains at least $2^{k-1} - 1$ moves, since $\vec a'$ and $\vec b'$ have disjoint support. Therefore the shortest path from $\vec a$ to $\vec b$ contains at least $2^{k-1}-1$ involutions. Since the first, last, and every other move is an adjustment, there must be $2^{k-1}$ adjustments as well, for a total of $2^k-1$ moves.
\end{proof}

Now let the \emph{Hanoi graph $G_{r,k}^*$} be the graph with vertex set $\cH_{r,k}^*$ and edges joining each state to all the states that can be obtained from it by a single move. Since an adjustment can be reversed by another adjustment, and an involution is its own inverse, $G_{r,k}^*$ is an undirected graph.

For any state $\vec a \in \cH_{r,k}^*$, there are at least $(r-k)^k$ other states with disjoint support to $\vec a$, out of $|\cH_{r,k}^*| = r^k$ other states, forming a $\left(1 - \frac{k}{r}\right)^k > 1 - \frac{k^2}{r}$ fraction of all the states. By Lemma~\ref{lemma:hanoi-game}, each such state $\vec b$ is at distance $2^k-1$ from $\vec a$ in the graph $G_{r,k}^*$, so $G_{r,k}^*$ is $\epsilon$-distance uniform with $\epsilon = \frac{k^2}{r}$, $n = r^k$ vertices, and critical distance $d = 2^k-1$.

Having established the graph-theoretic properties of $G_{r,k}^*$, we now prove Theorem~\ref{thm:intro-lower} by analyzing the asymptotic relationship between these parameters.

\begin{proof}[Proof of Theorem~\ref{thm:intro-lower}]
Begin by assuming that $n = 2^{2^m}$ for some $m$. Choose $a$ and $b$ such that $a+b=m$ and
\[
	\frac{2^{2b}}{2^{2^a}} \le \epsilon < \frac{2^{2(b+1)}}{2^{2^{a-1}}},
\]
which is certainly possible since $\frac{2^0}{2^{2^m}} = \frac1n \le \epsilon$ and $\frac{2^{2m}}{2^{2^0}} > 1 \ge \epsilon$. Setting $r = 2^{2^a}$ and $k = 2^b$, the Hanoi graph $G_{r,k}^*$ has $n$ vertices and is $\epsilon$-distance uniform, since $\frac{k^2}{r} \le \epsilon$. Moreover, our choice of $a$~and~$b$ guarantees that $\epsilon < \frac{4k^2}{\sqrt{r}}$, or $\log \epsilon^{-1} \ge \frac12 \log r - 2 \log 2k$. Since $n = r^k$, $\log n = k \log r$, so
\[
	\log \epsilon^{-1} \ge \frac{1}{2k} \log n - 2 \log 2k.
\]
We show that $k \ge \frac{\log n}{6 \log \epsilon^{-1}}$. Since $\epsilon \le \frac1{\log n}$, this is automatically true if $k \ge \frac{\log n}{6 \log \log n}$, so assume that $k < \frac{\log n}{6 \log \log n}$. Then
\[
	\frac{1}{3k} \log n > 2 \log \log n > 2 \log 2k, 
\]
so
\[
	\log \epsilon^{-1} \ge \frac1{2k} \log n - 2 \log 2k > \frac{1}{2k} \log n - \frac1{3k} \log n = \frac1{6k} \log n,
\]
which gives us the desired inequality $k \ge \frac{\log n}{6 \log \epsilon^{-1}}$. The Hanoi graph $G_{r,k}^*$ has critical distance $d = 2^k - 1 = 2^{\Omega(\frac{\log n}{\log \epsilon^{-1}})}$, so the proof is finished in the case that $n$ has the form $2^{2^m}$ for some $n$.

For a general $n$, we can choose $m$ such that $2^{2^m} \le n < 2^{2^{m+1}} = \left(2^{2^m}\right)^2$, which means in particular that $2^{2^m} \ge \sqrt n$. If $\epsilon < \frac{2}{\sqrt n}$, then the requirement of a critical distance of $2^{\Omega(\frac{\log n}{\log \epsilon^{-1}})}$ is only a constant lower bound, and we may take the graph $K_n$. Otherwise, by the preceding argument, there is a $\frac{\epsilon}{2}$-distance-uniform Hanoi graph with $2^{2^m}$ vertices; its critical distance $d$ satisfies
\[
	d \ge 2^{\Omega\left(\frac{\log \sqrt{n}}{\log (\epsilon/2)^{-1}}\right)} = 2^{\Omega\left(\frac{\log n}{\log \epsilon^{-1}}\right)}.
\]
To extend this to an $n$-vertex graph, take the blow-up of the $2^{2^m}$-vertex Hanoi graph, replacing every vertex by either $\lfloor n/2^{2^m} \rfloor$ or $\lceil n/2^{2^m} \rceil$ copies. 

Whenever $v$ and $w$ were at distance $d$ in the original graph, the copies of $v$ and $w$ will be at distance $d$ in the blow-up. The difference between floor and ceiling may slightly ruin distance uniformity, but the graph started out $\frac{\epsilon}{2}$-distance-uniform, and $\lceil n/2^{2^m} \rceil$ differs from $\lfloor n/2^{2^m} \rfloor$ at most by a factor of 2. Even in the worst case, where for some vertex $v$ the $\frac{\epsilon}{2}$-fraction of vertices not at distance $d$ from $v$ all receive the larger number of copies, the resulting $n$-vertex graph will be $\epsilon$-distance-uniform.
\end{proof}

\subsection{Points on a sphere}

In this section, we identify $G_{r,k}$, the graph of the Hanoi game on $\cH_{r,k}$, with a graph that arises from a geometric construction.

Fix a dimension $r$. We begin by placing $r+1$ points on the $r$-dimensional unit sphere arbitrarily in general position (though, for the sake of symmetry, we may place them at the vertices of an equilateral $r$-simplex). We identify these points with a graph by taking the 1-skeleton of their convex hull. In this starting configuration, we simply get $K_{r+1}$.

Next, we define a truncation operation on a set of points on the $r$-sphere. Let $\delta>0$ be sufficiently small that a sphere of radius $1-\delta$, concentric with the unit sphere, intersects each edge of the 1-skeleton in two points. The set of these intersection points is the new arrangement of points obtained by the truncation; they all lie on the smaller sphere, and for convenience, we may scale them so that they are once again on the unit sphere. An example of this is shown in Figure~\ref{fig:truncation}.

\begin{figure}[h!]
        \centering
        \begin{subfigure}[b]{0.3\textwidth}
                \includegraphics[width=\textwidth]{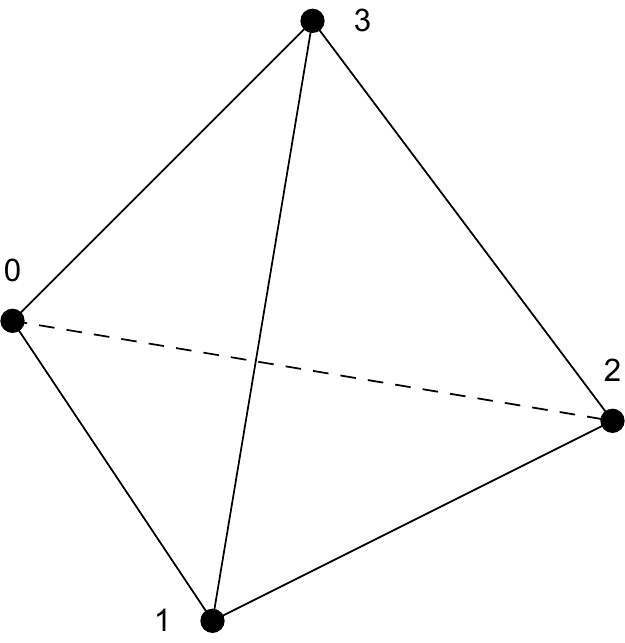}
                \caption{A tetrahedron}
                \label{fig:truncation-1}
        \end{subfigure}%
        \qquad \qquad
       \begin{subfigure}[b]{0.3\textwidth}
                \includegraphics[width=\textwidth]{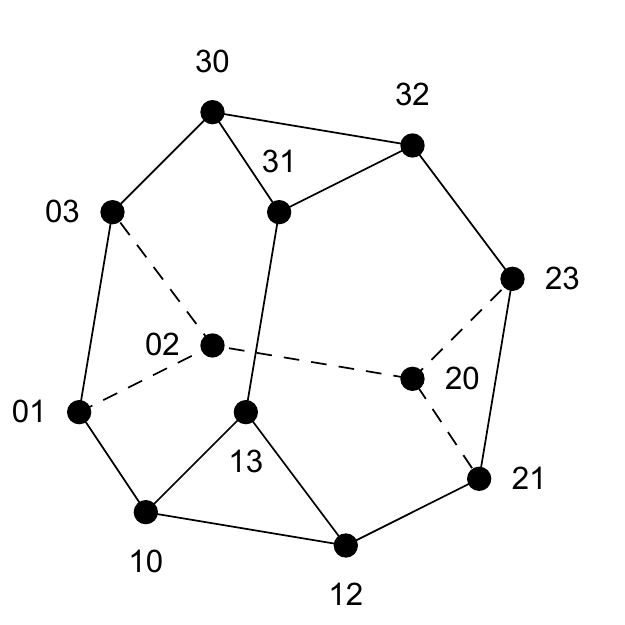}
                \caption{A truncated tetrahedron}
                \label{fig:truncation-2}
        \end{subfigure}
        \caption{An example of truncation}\label{fig:truncation}
\end{figure}

\begin{prop}
Starting with a set of $r+1$ points on the $r$-dimensional sphere and applying $k$ truncations produces a set of points such that the 1-skeleton of their convex hull is isomorphic to the graph $G_{r,k}$.
\end{prop}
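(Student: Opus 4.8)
The plan is to prove the proposition by induction on the number of truncations, carrying along an explicit bijection between the vertices of the truncated polytope and the Hanoi states, and then checking that this bijection carries edges to the adjustment and involution moves. I would first isolate the two facts about truncation that make the induction run: (i) if $P$ is a simple $r$-polytope, then slicing off every vertex of $P$ by a sufficiently shallow hyperplane gives a simple $r$-polytope $P'$ whose combinatorial type depends only on that of $P$, not on the truncation depth $\delta$ or on the exact coordinates (as long as the points remain in convex position, which is automatic here because after rescaling they again lie on a sphere); and (ii) since every vertex $v$ of a simple $r$-polytope has exactly $r$ incident edges, in $P'$ each $v$ is replaced by the $r$ vertices of an $(r-1)$-simplex facet $F_v$, one for each edge of $P$ at $v$, and the edges of $P'$ are exactly the $K_r$'s spanned by the vertices of the various $F_v$ (``new'' edges) together with, for each edge $vw$ of $P$, one ``old'' edge joining the vertex of $F_v$ labelled $vw$ to the vertex of $F_w$ labelled $vw$. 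The base case is the configuration before any truncation: its $1$-skeleton is $K_{r+1}$, and labelling the $r+1$ points by $0,1,\dots,r$ identifies them with the length-one Hanoi states, with all pairs as edges (exactly the adjustments), so this is the Hanoi graph $G_{r,1}$.

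For the inductive step, assume the current $1$-skeleton is $G_{r,k}$, each vertex carrying a state $\vec u=(u_1,\dots,u_k)\in\cH_{r,k}$, with the $r$ edges at $\vec u$ being the $r-1$ adjustments (replace $u_k$ by some $x\notin\{u_{k-1},u_k\}$) together with the unique involution of $\vec u$. After one more truncation, I label the $r$ vertices of $F_{\vec u}$ by the $r$ length-$(k+1)$ Hanoi states extending $\vec u$: the adjustment that sends $u_k$ to $x$ receives the label $(u_1,\dots,u_k,x)$, and the involution edge receives the label $(u_1,\dots,u_k,u_{k-1})$. These are exactly the length-$(k+1)$ Hanoi states whose first $k$ entries form $\vec u$ (the last label is legal since $u_{k-1}\ne u_k$), and the labelling is injective: dropping the last entry recovers $\vec u$, and the last entry then identifies the vertex of $F_{\vec u}$. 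So the new vertex set is in bijection with $\cH_{r,k+1}$.

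It remains to match the edges. The new edges inside $F_{\vec u}$ join $(u_1,\dots,u_k,x)$ to $(u_1,\dots,u_k,x')$ with $x,x'\ne u_k$, which are exactly the adjustments of the length-$(k+1)$ states; conversely every such adjustment sits inside some $F_{\vec u}$. For the old edges there are two cases. If the surviving edge of $G_{r,k}$ was the adjustment edge between $(w_1,\dots,w_{k-1},a)$ and $(w_1,\dots,w_{k-1},b)$, then after truncation it joins $(w_1,\dots,w_{k-1},a,b)$ to $(w_1,\dots,w_{k-1},b,a)$, and since $b\ne w_{k-1}$ the longest alternating tail of the first state is just its last two entries, so this edge is the involution of that state. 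If the surviving edge was the involution edge between $\vec u$ and its involution $\vec v$, then after truncation it joins $(u_1,\dots,u_k,u_{k-1})$ to $(v_1,\dots,v_k,v_{k-1})$, and here the crux is the identity that appending $u_{k-1}$ to $\vec u$ lengthens its longest alternating tail by exactly one entry, so that the length-$(k+1)$ involution applied to $(u_1,\dots,u_k,u_{k-1})$ produces exactly $(v_1,\dots,v_k,v_{k-1})$; hence this edge is again an involution. A final short matching argument --- sorting the involutions of $\cH_{r,k+1}$ by whether their alternating tail has length $2$ or length at least $3$ --- shows that every adjustment and every involution of $\cH_{r,k+1}$ arises exactly once, so the new $1$-skeleton is $G_{r,k+1}$, completing the induction.

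The step I expect to be the main obstacle is the last one, specifically the involution-to-involution check: one has to track carefully how the ``longest alternating tail'' of a Hanoi state changes when a coordinate is appended and an involution is then performed, so that old involution edges map onto new involution edges and onto nothing else. The polytope bookkeeping --- that simplicity and general position survive all the truncations, so that each $F_v$ is genuinely an $(r-1)$-simplex and the combinatorial type is well defined --- is routine but should be recorded, and the adjustment cases are immediate.
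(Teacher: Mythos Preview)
Your proposal is correct and follows the same inductive route as the paper: the bijection you set up agrees with the paper's map $(\vec x,\vec y)\mapsto(x_1,\dots,x_{k-1},y_{k-1})$, and the edge verification is the same case analysis (new edges $\leftrightarrow$ adjustments, old edges $\leftrightarrow$ involutions, the latter split by whether the parent edge was an adjustment or an involution). The only cosmetic difference is in how the combinatorics of a single truncation is justified: you maintain inductively that the polytope is simple, so each vertex figure is an $(r-1)$-simplex and $F_v$ spans a $K_r$, whereas the paper instead carries the auxiliary hypothesis that any vertex together with two of its neighbors lies on a common $2$-face---these are equivalent bookkeeping devices.
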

\begin{proof}
We induct on $k$. When $k=1$, the graph we get is $K_{r+1}$, which is isomorphic to $G_{r,1}$.

From the geometric side, we add an auxiliary statement to the induction hypothesis: given points $p, q_1, q_2$ such that, in the associated graph, $p$ is adjacent to both $q_1$ and $q_2$, there is a 2-dimensional face of the convex hull containing all three points. This is easily verified for $k=1$.

Assuming that the induction hypotheses are true for $k-1$, fix an isomorphism of $G_{r,k-1}$ with the set of points after $k-1$ truncations, and label the points with the corresponding vertices of $G_{r,k-1}$. We claim that the graph produced after one more truncation has the following structure:
\begin{enumerate}
\item A vertex that we may label $(\vec x, \vec y)$ for every ordered pair of adjacent vertices of $G_{r,k-1}$.
\item An edge between $(\vec x, \vec y)$ and $(\vec y, \vec x)$. 

\item An edge between $(\vec x, \vec y)$ and $(\vec x, \vec z)$ whenever both are vertices of the new graph.
\end{enumerate}
The first claim is immediate from the definition of truncation: we obtain two vertices from the edge between $\vec x$ and $\vec y$. We choose to give the name $(\vec x, \vec y)$ to the vertex closer to $\vec x$. The edge between $\vec x$ and $\vec y$ remains an edge, and now joins the vertices $(\vec x, \vec y)$ and $(\vec y, \vec x)$, verifying the second claim.

By the auxiliary condition of the induction hypothesis, the vertices labeled $\vec x$, $\vec y$, and $\vec z$ lie on a common 2-face whenever $\vec x$ is adjacent to both $\vec y$ and $\vec z$. After truncation, $(\vec x, \vec y)$ and $(\vec x, \vec z)$ will also be on this 2-face; since they are adjacent along the boundary of that face, and extreme points of the convex hull, they are joined by an edge, verifying the third claim.

To finish the geometric part of the proof, we verify that the auxiliary condition remains true. There are two cases to check. For a vertex labeled $(\vec x, \vec y)$, if we choose the neighbors $(\vec x, \vec z)$ and $(\vec x, \vec w)$, then any two of them are joined by an edge, and therefore they must lie on a common 2-dimensional face. If we choose the neighbors $(\vec x, \vec z)$ and $(\vec y, \vec x)$, then the points continue to lie on the 2-dimensional face inherited from the face through $\vec x$, $\vec y$, and $\vec z$ of the previous convex hull.

Now it remains to construct an isomorphism between the 1-skeleton graph of the truncation, which we will call $T$, and $G_{r,k}$. We identify the vertex $(\vec x, \vec y)$ of $T$ with the vertex $(x_1, x_2, \dots, x_{k-1}, y_{k-1})$ of $G_{r,k}$. Since $x_{k-1} \ne y_{k-1}$ after any move in the Hanoi game, this $k$-tuple really is a Hanoi state. Conversely, any Hanoi state $\vec z \in \cH_{r,k}$ corresponds to a vertex of $T$: let $\vec x = (z_1, z_2, \dots, z_{k-1})$, and let $\vec y$ be the state obtained from $\vec x$ by either an adjustment of $z_{k-1}$ to $z_k$, if $z_k \ne z_{k-2}$, or else an involution, if $z_k = z_{k-2}$. Therefore the map we define is a bijection between the vertex sets.

Both $T$ and $G_{r,k}$ are $r$-regular graphs, therefore it suffices to show that each edge of $T$ corresponds so an edge in $G_{r,k}$. Consider an edge joining $(\vec x, \vec y)$ with $(\vec x, \vec z)$ in $T$. This corresponds to vertices $(x_1, x_2, \dots, x_{k-1}, y_{k-1})$ and $(x_1, x_2, \dots, x_{k-1}, z_{k-1})$ in $G_{r,k}$; these are adjacent, since we can obtain one from the other by an adjustment.

Next, consider an edge joining $(\vec x, \vec y)$ to $(\vec y, \vec x)$. If $\vec x$ and $\vec y$ are related by an adjustment in $G_{r,k-1}$, then they have the form $(x_1, \dots, x_{k-2}, x_{k-1})$ and $(x_1, \dots, x_{k-2}, y_{k-1})$. The vertices corresponding to $(\vec x, \vec y)$ and $(\vec y, \vec x)$ in $G_{r,k}$ are $(x_1, \dots, x_{k-2}, x_{k-1}, y_{k-1})$ and $(x_1, \dots, x_{k-2}, y_{k-1}, x_{k-1})$, and one can be obtained from the other by an involution. 

Finally, if $\vec x$ and $\vec y$ are related by an involution in $G_{r,k-1}$, then that involution swaps $x_{k-1}$ and $y_{k-1}$. Therefore such an involution in $G_{r,k}$ will take $(x_1, \dots, x_{k-1}, y_{k-1})$ to $(y_1, \dots, y_{k-1}, x_{k-1})$, and the vertices corresponding to $(\vec x, \vec y)$ and $(\vec y, \vec x)$ are adjacent in $G_{r,k}$.
\end{proof}

\bibliographystyle{plain}
%\bibliography{C:/Users/Misha/Dropbox/Bibliography/all-papers}

\end{document}